\theoremstyle{definition}
\newtheorem{theorem}{Theorem}
\newtheorem{lemma}{Lemma}
\newtheorem{proof of lemma}{proof of Lemma}
\newcommand{\blind}{1}
\begin{document}

\def\spacingset#1{\renewcommand{\baselinestretch}%
{#1}\small\normalsize} \spacingset{1}

\if1\blind
{
 \title{\bf Testing Homogeneity for Normal Mixture Models: Variational Bayes Approach}
 \author{Natsuki Kariya, and Sumio Watanabe\hspace{.2cm}\\
 Department of Mathematical and Computing Science, \\ Tokyo Institute of Technology\\
 }
 \maketitle
} \fi

\if0\blind
{
 \bigskip
 \bigskip
 \bigskip
 \begin{center}
 {\LARGE\bf Testing Homogeneity for Normal Mixture Models: Variational Bayes Approach}
\end{center}
 \medskip
} \fi

\bigskip

\begin{abstract}
The test of homogeneity for normal mixtures has been conducted in diverse research areas, but constructing a theory of the test of homogeneity is challenging because the parameter set for the null hypothesis corresponds to singular points in the parameter space. 
In this paper, we examine this problem from a new perspective and offer a theory of hypothesis testing for homogeneity based on a variational Bayes framework. 
In the conventional theory, the constant order term of the free energy has remained unknown, however,
we clarify its asymptotic behavior because it is necessary for constructing  
a hypothesis test.  Numerical experiments shows the validity of our theoretical results. 
\end{abstract}

\noindent%
{\it Keywords:} hypothesis test, Bayesian statistics, variational inference, singular model, mixture model, likelihood ratio
\vfill

\newpage
\spacingset{1.45} 
\section{Introduction}
\label{sec:intro}

Mixture models are very useful for describing the data that comprises the effects of several different factors. These models have been used  in various fields, including pattern recognition, clustering analysis, and anomaly detection\cite{Mclachlan2000}. Identifying the number of the clusters that affect the data is a very important problem, and a hypothesis test is one useful tools for this purpose. This type of the tests is called testing homogeneity. Testing homogeneity has been considered for various mixture models, but its use for normal mixture models has been studied especially\cite{Chauveau2017}. 

Theoretically, mixture models often have singularity in their parameter space where the Fisher information matrix becomes singular. This results in the log likelihood ratio not converging to $\chi^{2}$ distributions, unlike the case of the regular models. This is why testing homogeneity for mixture model is theoretically challenging. \cite{Hartigan1985}\cite{Garel2001}.

To circumvent this problem, various methods have been proposed, such as the modified likelihood ratio test, a method that adds a regularizing term \cite{Chen2001}\cite{Chen2004},  a D test\cite{Charnigo2004}, and applying an expectation-maximization (EM) algorithm for calculating the modified likelihood ratio\cite{Chen2009}\cite{Chen2012}, and so on. 
However,  few studies exists that treats the problem based on a $\it{Bayesian} $ treatment.

For statistical inference using singular models, the properties and effectiveness of a Bayesian treatment have been clarified  through the learning theory\cite{Watanabe2018}. It is natural to consider the application of this theory to the problem of the hypothesis test using singular models. However, theoretical studies with such motivation are still very limited.

In the  Bayesian hypothesis test, one must calculate the test statistics, the marginal likelihood ratio, from the posterior.  In general, this procedure is costly, and an efficient method is needed. Variational Bayes (also called variational inference)\cite{Attias2000}\cite{Blei2017} is a popular and useful method for fulfilling this requirement. However, to the best of our knowledge, no studies have yet applied variational Bayes to the approximation of the marginal likelihood ratio and used it to construct a hypothesis test has not been studied, especially for testing homogeneity, as far as the authors know.

There have been  some studies on the asymptote of the variational free energy for mixture models\cite{WatanabeWatanabe2006}\cite{WatanabeWatanabe2007}. One important results of these studies is that the phase transition is induced by the hyperparameter. Phase transitions drastically change the behavior of the test statistics, and the properties of the phase transitions must be studied to constructing a hypothesis test.

Also, one must obtain the stochastic behavior of the test statistics for constructing a Bayesian hypothesis test. Previous work has already shown that the stochastic term of the logarithm of the test statistics (variational free energy) is  $\mathcal{O}(1)$, while the leading term is $\mathcal{O}(\log n)$\cite{WatanabeWatanabe2006}\, but in the previous work, the estimation of the variational free energy is within the order of $\mathcal{O}(\log n)$. Therefore, the estimation of the variational free energy  to a higher order  is needed, but this has not been accomplished yet.

In this paper, we theoretically construct a new way to test for homogeneity of normal mixture models based on the variational Bayes framework. 
We apply the scheme of the variational Bayes to our problem and theoretically derive the asymptotic distribution of the constant order term in the variational free energy, a task accomplished for the first time to the best of our knowledge. We show that our model has the phase transition, and we clarify the critical value.  Also we derive the analytical expression of the variational free energy on the order of $\mathcal{O}(1)$, when the hyperparameter $\phi$ is larger than the critical value. We construct a new hypothesis test based on our results and demonstrate its validity with numerical experiments.

\section{Bayesian hypothesis test}

In this section, we briefly review the framework of a Bayesian hypothesis test. We also define our problem concretely and introduce the latent variables.

Let $\left\{X^{n} =(X_1,X_2,...,X_n)\in \mathbb{R}^{1}\right\}$ be sample,  generated independently and identically from a probabilistic model $p_{0}(x|w)$, 
\begin{equation}
p_{0}(x|w) = (1-a) \mathcal{N}(0,1^2) + a\mathcal{N}(b,1^2), 
\end{equation}
where $a$ and ${\cal N}(b,1^2)$ show the mixture ratio and the normal distribution whose average is $b$ and whose variance is $1^{2}$, respectively. 
The parameter of this model is $w=(a,b)$, where $0\leq a \leq 1$ and $b\in\mathbb{R}$. 

In the Bayesian framework,  parameters $w_{0}$ is assumed to be generated from a prior $\varphi(w)$, which is described as 
\[
w_0\sim \varphi(w),\;\;\;X_i\sim p_{0}(x|w_0). 
\]
For testing homogeneity in a normal mixture model, the null and alternative hypotheses are set as
\begin{eqnarray*}
\mbox{N.H.} & : & w_0\sim \varphi_0(w),\;\;\;X_i\sim p_{0}(x|w_0), \\
\mbox{A.H.} & : & w_0\sim \varphi_1(w),\;\;\;X_i\sim p_{0}(x|w_0). 
\end{eqnarray*}
The marginal likelihood ratio can be written as,
\begin{equation}
L(X^{n}) = \frac{\displaystyle \int \varphi_{1}(w) \prod_{i}p_{0}(X_{i}|w) dw}
{\displaystyle \int \varphi_{0}(w) \prod_{i}p_{0}(X_{i}|w) dw}. 
\end{equation}
In this paper, we discuss the asymptotic properties of $L(X^n)$ for the hypothesis test of homogeneity. 

We assume that the N.H and A.H are as follows,
\begin{eqnarray*}
\varphi_0(a,b) &=& \delta(a)\delta(b), \\
\varphi_{1}(a,b) &=& U_{a}(0,1) \times \frac{1}{\sqrt{2\pi\sigma^{2}}}\exp{\left(-\frac{1}{2\sigma^{2}}b^{2}\right)}.
\end{eqnarray*}
where $U_a(0,1)$ is a uniform distributions of $a$ on $(0,1)$. 

The essentially difficult task is calculating the numerator of $L(X^{n})$. This is equivalent to the integration of the posterior in the parameter space, under the A. H. In the following sections, we will discuss how to estimate this quantity and construct the hypothesis test based on it.

First, we introduce latent variables $\left\{y_{ik}\right\}$ for convenience. The variable $y_{ik} \in \left\{0,1\right\}$ shows to which cluster in the probabilistic model the sample ${X_{i}}$ belongs.  Note that the latent variables satisfy the relation $\sum_{k}y_{ik} = 1$.

Using the latent variables, the posterior under the A. H.  can be written as,
\begin{equation}
p(w,\left\{y_{ik}\right\}|X^{n}) \equiv \frac{1}{Z_{n}}\prod_{k}\left\{a_{k}e^{-\left(X_{i}-b_{k}\right)^{2}/2}\right\}^{y_{ik}}\varphi_{1}(w) 
\end{equation}
where $a_{0} = (1-a)$ and $a_{1} = a$, and $b_{0} = 0$ and $b_{1} = b$. We simply write the set  of the  parameter$\left\{a,b\right\}$ as $w$, and the summation of $\left\{y_{ik}\right\}$ is taken for all configurations, $Z_{n}$ is

\[
Z_{n} \equiv \displaystyle \int dw \sum_{\left\{y_{ik}\right\}}\prod_{k}\left\{a_{k}e^{-\left(X_{i}-b_{k}\right)^{2}/2}\right\}^{y_{ik}}\varphi_{1}(w).
\]

\newpage

\section{Variational approximation for conditional probability $p(w|X^{n})$}
In this section, we approximate $p(w,\left\{y_{ik}\right\}, X^{n})$ using  the variational Bayes approach. That is, we derive a function $q(\left\{y_{ik}\right\})r(w)$ that minimizes the Kullbuck-Leibler divergence,

\begin{equation*}
D (qr||p)= \int dw \sum_{\left\{y_{ik}\right\}} q(\left\{y_{ik}\right\})r(w) \log{\frac{q(\left\{y_{ik}\right\})r(w)}{p(w,X^{n})}}.
\end{equation*}
The $q(\left\{y_{ik}\right\})$ and $r(w)$ that minimize the Kullbuck-Leibler divergence should satisfy the following conditions, which are derived from the variational principle.
\[
q(\left\{y_{ik}\right\}) \propto \exp{\left[E_{r}\left\{\log {p(w|X^{n})}\right\}\right]}, 
\]
\[
r(w) \propto \exp{\left[E_{q}\left\{\log {p(w|X^{n})}\right\}\right]}.
\]
where $E_{r}\left\{\cdot \right\}$ means the expected value with respect to $r(w)$, and $E_{q}\left\{\cdot \right\}$ means the expected value with respect to $q(\left\{y_{ik}\right\})$.
The logarithm of $p(w,\left\{y_{ik}\right\}, X^{n}) $ becomes
\begin{eqnarray}
\log{p(w,\left\{y_{ik}\right\},X^{n}) } &=&  \sum_{i}\sum_{k}y_{ik}\left[\log{a_{k}}  - \frac{1}{2}\left(X_{i}-\delta_{k1}b\right)^{2}\right] \nonumber \\
&-&n\log{\left(2\pi\sigma^{2}\right)}+\log{\varphi_{1}(a,b)}, 
\end{eqnarray}
It is linear with respect to $y_{ik}$, and we can easily derive $r(w)$, 
\begin{eqnarray}
r(w) &\propto& \exp{\left[E_{q}\left\{\log{p(w,\left\{y_{ik}\right\},X^{n}) }\right\}\right]} \nonumber \\ 
&=& \prod_{k}\prod_{i}a_{k}^{\hat{y_{ik}}} \frac{1}{\sqrt{2\pi}}\left\{\exp{\left[-\frac{1}{2}\left(X_{i}-\delta_{k1}b\right)^{2}\right]}\right\}^{\hat{y_{ik}}} \nonumber \\
&\times& \frac{1}{\sqrt{2\pi\sigma^{2}}}\exp{\left[-\frac{1}{2\sigma^{2}}b^{2}\right]},
\end{eqnarray}
where $\hat{y_{ik}}$ means $E_{q}\left\{y_{ik}\right\}$.

Similar to $r(w)$, we can derive $q(\left\{y_{ik}\right\})$, which becomes

\begin{eqnarray}
q(y_{ik})  &\propto& \exp{\left[\sum_{i}\sum_{k}y_{ik}\left\{\langle \log{a_{k}}\rangle -\frac{1}{2}\langle\left(X_{i}-\delta_{k1}b\right)^{2}\rangle\right\}\right]} \nonumber \\
&=& \prod_{i}\prod_{k}\left\{\exp{\left[\langle \log{a_{k}}\rangle - \frac{1}{2}\langle \left(X_{i}-\delta_{k1}b\right)^{2}\rangle \right]}\right\}^{y_{ik}}
\end{eqnarray}
where $\langle\left\{\cdot\right\}\rangle$ is abbreviation of $E_{r}\left\{\cdot \right\}$. 

The self-consistent equation that $\hat{y_{ik}}$ should satisfy becomes
\begin{equation}
\hat{y_{ik}} \propto \exp{\left[\langle \log{a_{k}}\rangle - \frac{1}{2}\langle \left(X_{i}-\delta_{k1}b\right)^{2}\rangle\right]}
\end{equation}
In addition, the self-consistent equation $a_{k}$ should satisfy is
\begin{equation}
\langle \log{a_{k}}\rangle = \psi \left(\sum_{i}\hat{y_{ik}} + 1\right) - \psi\left(n + 2\right),
\end{equation}
where $\psi(\cdot)$ is the digamma function.

From these results, $r(w)$ can be written as,
\begin{eqnarray}
r(w) &\propto& \prod_{k}\prod_{i}a_{k}^{\hat{y_{ik}}}\exp{\left[-\frac{1}{2}\left(\frac{1}{\sigma^{2}}+\sum_{i}\hat{y_{ik}}\right)\right]} \nonumber \\
 &\times& \exp{\left(b-\frac{\sum_{i}X_{i}\hat{y_{ik}}}{\frac{1}{\sigma^{2}}+\sum_{i}\hat{y_{ik}}}\right)^{2}}
\end{eqnarray}
and $\langle b\rangle$ and $\langle b^{2} \rangle$ should satisfy
\[
\langle b\rangle = \frac{\sum_{i}X_{i}\hat{y_{ik}}}{\sum_{i}\hat{y_{ik}}+\frac{1}{\sigma^{2}}},
\]

\[
\langle b^{2}\rangle = \langle b\rangle^{2} + \frac{1}{\sum_{i}\hat{y_{ik}}+\frac{1}{\sigma^{2}}}.
\]
We obtain the self-consistent equations for $\hat{y_{ik}}$ as follows,

\begin{eqnarray}
\hat{y_{i0}} &\propto& \exp{\left[\psi\left(\sum_{i}\hat{y_{i0}}+1\right)-\psi(n+2) -\frac{1}{2}X_{i}^{2}\right]} \\ \nonumber 
\hat{y_{i1}} &\propto& \exp{\left[\psi\left(\sum_{i}\hat{y_{i1}}+1\right)-\psi(n+2) - \frac{1}{2}\langle \left(X_{i}-b\right)^{2}\rangle\right]} \\ \nonumber 
&=& \exp{\left[\psi\left(\sum_{i}\hat{y_{i1}}+1\right)-\psi(n+2) \right]} \\
&\times& \exp{\left[- \frac{1}{2}\left\{\left(X_{i}-\langle b\rangle \right)^{2} +\frac{1}{\sum_{i}\hat{y_{i1}}+\frac{1}{\sigma^{2}}}\right\} \right]} 
\end{eqnarray}
The variational free energy becomes
\begin{eqnarray}
F &=& \langle q(\left\{y_{ik}\right\})r(w) \log{\frac{q(\left\{y_{ik}\right\})r(w)}{p(w,X^{n})}} \rangle \nonumber \\ 
&=& \sum_{i}\sum_{k}\hat{y_{ik}}\log{\hat{y_{ik}}} + \log{\frac{\Gamma(\sum \langle a_{k}\rangle)}{\prod \Gamma(\langle a_{k}\rangle)}} \nonumber \\
&+&\frac{1}{2}\log{\left(1+\sigma^{2}\sum_{i}\hat{y_{i1}}\right)} + \frac{1}{2\sigma^{2}}\langle b\rangle^{2} \nonumber \\ 
&+& \frac{1}{2}\sum_{i}\sum_{k}\hat{y_{ik}}\left(X_{i}-\delta_{k1}\langle b_{k}\rangle\right)^{2} + \frac{1}{2}\sum_{i}\sum_{k}\hat{y_{ik}}\log{\left(2\pi\right)} \nonumber \\
&=& \sum_{i}\sum_{k}\hat{y_{ik}}\log{\hat{y_{ik}}} + \log{\frac{\Gamma(\sum_{k} \sum_{i}\hat{y_{ik}}+1)}{\prod \Gamma(\sum_{i}\hat{y_{ik}}+1)}}  \nonumber \\
&+& \frac{1}{2}\log{\left(1+\sigma^{2}\sum_{i}\hat{y_{i1}}\right)} + \frac{1}{2}\sum X_{i}^{2} \nonumber \\
&-& \frac{1}{2}\frac{\left(\sum X_{j}\hat{y_{j1}}\right)^{2}}{\sum_{i}\hat{y_{i1}}+\frac{1}{\sigma^{2}}} + \frac{n}{2} \log{\left(2\pi\right)}
\end{eqnarray}
The logarithm of the denominator of $L$ is calculated as 

\begin{eqnarray*}
F_{0} &=& - \log{\int  \varphi_{0}(w) \prod p_{0}(X_{i},w) dw } \nonumber \\
&=& \frac{1}{2}\sum_{i} X_{i}^{2} + \frac{n}{2}\log{\left(2\pi\right)}
\end{eqnarray*}
Therefore, we obtain the logarithm of $L$,  

\begin{eqnarray}
F - F_{0} &=& \sum_{i}\sum_{k}\hat{y_{ik}}\log{\hat{y_{ik}}} +  \log{\frac{\Gamma(\sum_{k} \sum_{i}\hat{y_{ik}}+1)}{\prod \Gamma(\sum_{i}\hat{y_{ik}}+1)}} \nonumber \\
&+& \frac{1}{2}\log{\left(1+\sigma^{2}\sum_{i}\hat{y_{i1}}\right)}  - \frac{1}{2}\frac{\left(\sum X_{j}\hat{y_{j1}}\right)^{2}}{\sum_{i}\hat{y_{i1}}+\frac{1}{\sigma^{2}}} 
\end{eqnarray}
We can extend the result above and obtain the variational free energy when the prior of $a$ is  a Dirichlet distribution, $\varphi(a) \propto (1-a)^{\phi-1}a^{{\phi-1}}$. The result is 

\begin{eqnarray}
F - F_{0} &=& \sum_{i}\sum_{k}\hat{y_{ik}}\log{\hat{y_{ik}}} +  \log{\frac{\Gamma(\sum_{k} \sum_{i}\hat{y_{ik}}+2\phi)}{\prod \Gamma(\sum_{i}\hat{y_{ik}}+\phi)}} \nonumber \\
&+& \frac{1}{2}\log{\left(1+\sigma^{2}\sum_{i}\hat{y_{i1}}\right)}  - \frac{1}{2}\frac{\left(\sum X_{j}\hat{y_{j1}}\right)^{2}}{\sum_{i}\hat{y_{i1}}+\frac{1}{\sigma^{2}}} \nonumber \\
&-& \log{\frac{\Gamma(2\phi)}{\prod(\Gamma(\phi))}}
\end{eqnarray}
If we can derive the asymptotic distribution of $F - F_{0}$, we can construct a hypothesis test. This requires the stochastic behavior of $\hat{y_{i1}}$.  However, as discussed in the next section, the variational free energy exhibits the phase transition when the hyperparameter $\phi$ changes, This affects the configuration and stochastic behavior of $\hat{y_{i1}}$.

\section{Phase transition induced by the hyperparameter}
In our problem, the candidates for the parameter sets that minimize the variational free energy are those that corresponding to the null hypothesis.  However, the parameter sets that corresponds to the null hypothesis are not unique. Specifically, $\left\{y_{i1}\right\}$ that satisfies $\sum_{y_{i1}} = \mathcal{O}(1)$ and $\langle b \rangle = 0$ is one candidate, but also $\left\{y_{i1}\right\}$ that satisfies $\sum y_{i1} = 0$ is another one. 

In a previous study treating normal mixtures\cite{WatanabeWatanabe2006},  the upper and lower bounds of the asymptote of the variational free energy were derived within $\mathcal{O}(\log(n))$, and the existence of the phase transition induced by the hyperparameter was proven. We can expect that the phase transition to occur in our model as well, and it should be examined.

The phase transition affects the stochastic behavior of the variational free energy, the test statistics. Therefore, we must study the effect of the phase transition and grasp what kind of configuration is obtained as a function of the hyperparameter.This is the main purpose in this section. We firstly show the existence of the phase transition and derive the critical point $\phi_{\rm{cr}}$.

\newpage 

\subsection{Asymptotic form of $F$ when $\sum_{i}y_{i1}$ is $\mathcal{O}(n)$}
Our purpose is to construct the hypothesis test, and we focus on a situation when the hypothesis test is important, namely, one in which distinguishing two hypotheses is difficult.  Specifically,  when $\langle b \rangle$ is small and two gaussian distribution in the model are largely overlapped, distinguishing the two distributions is difficult. We assume such a situation, specifically $\langle b\rangle X_{\rm{max}} \sim o_{p}(1)$, under the null hypothesis. 

Under these conditions,  the following theorem holds.
\begin{theorem}
Under $\sum_{i}y_{i1}$ is $\mathcal{O}(n)$ and $\langle b\rangle \sim o(1/\sqrt{\log n})$, the asymptotic form of the variational free energy becomes
\begin{equation}
F - F_{0} =  \log{n} + o(\log n)
\end{equation}
under the null hypothesis.
\end{theorem}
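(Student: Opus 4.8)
The plan is to evaluate the closed-form expression for $F-F_{0}$ term by term and to show that exactly two summands each contribute $\tfrac12\log n$. Writing $S_{1}=\sum_{i}\hat{y_{i1}}$ and $S_{0}=\sum_{i}\hat{y_{i0}}=n-S_{1}$, I split the free energy (using the general-$\phi$ form) into the responsibility entropy $T_{1}=\sum_{i}\sum_{k}\hat{y_{ik}}\log\hat{y_{ik}}$, the Dirichlet term $T_{2}=\log\{\Gamma(n+2\phi)/(\Gamma(S_{0}+\phi)\Gamma(S_{1}+\phi))\}$, the log-variance term $T_{3}=\tfrac12\log(1+\sigma^{2}S_{1})$, and the quadratic term $T_{4}=-\tfrac12(\sum_{j}X_{j}\hat{y_{j1}})^{2}/(S_{1}+1/\sigma^{2})$. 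I will argue that $T_{1}+T_{2}=\tfrac12\log n+o(\log n)$ and $T_{3}=\tfrac12\log n+O(1)$, while $T_{4}=o(\log n)$, which gives the claim.

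First I would use the hypothesis $\langle b\rangle X_{\max}=o_{p}(1)$ to flatten the responsibilities. In the self-consistent equation the log-odds $\log(\hat{y_{i1}}/\hat{y_{i0}})$ equals $\psi(S_{1}+1)-\psi(S_{0}+1)+X_{i}\langle b\rangle-\tfrac12\langle b\rangle^{2}-\tfrac{1}{2(S_{1}+1/\sigma^{2})}$, whose only $i$-dependence is the term $X_{i}\langle b\rangle$, uniformly $o_{p}(1)$ by assumption. Hence every $\hat{y_{i1}}$ lies within $o_{p}(1)$ of a common value $p=e^{\Delta}/(1+e^{\Delta})$, where $\Delta$ is the $i$-independent part of the log-odds, with $p$ bounded away from $0$ and $1$ because $S_{1}$ is of exact order $n$, so $S_{1}=pn\,(1+o(1))$. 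Writing $\hat{y_{i1}}=p+\eta_{i}$ with $\eta_{i}=p(1-p)X_{i}\langle b\rangle+\cdots$, the null facts $\sum_{i}X_{i}=O_{p}(\sqrt n)$ and $\sum_{i}X_{i}^{2}=n(1+o_{p}(1))$ make $\sum_{i}\eta_{i}$ and $\sum_{i}\eta_{i}^{2}$ of order $O_{p}(1)$ once the self-consistent scale of $\langle b\rangle$ is inserted, so that $T_{1}=-n\,h(p)+O_{p}(1)$ with $h(p)=-p\log p-(1-p)\log(1-p)$.

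The computational core is a Stirling expansion of $T_{2}$ via $\log\Gamma(z)=(z-\tfrac12)\log z-z+\tfrac12\log(2\pi)+o(1)$. The leading $z\log z$ pieces combine into $n\log n-S_{0}\log S_{0}-S_{1}\log S_{1}=n\,h(p)+O(1)$, and the shift-$\phi$ contributions cancel at order $\log n$; crucially the three $-\tfrac12\log z$ corrections leave a net $+\tfrac12\log n$, since the single numerator factor contributes $-\tfrac12\log n$ while the two denominator factors contribute $+\tfrac12\log S_{0}$ and $+\tfrac12\log S_{1}$. Thus $T_{2}=n\,h(p)+\tfrac12\log n+O(1)$, the extensive $n\,h(p)$ cancels against $T_{1}$, and $T_{1}+T_{2}=\tfrac12\log n+o(\log n)$. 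Because $S_{1}$ is of order $n$, the term $T_{3}=\tfrac12\log(\sigma^{2}S_{1})+o(1)=\tfrac12\log n+O(1)$ supplies the second half.

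Finally I would control $T_{4}$. Substituting $\langle b\rangle=(\sum_{j}X_{j}\hat{y_{j1}})/(S_{1}+1/\sigma^{2})$ yields the identity $T_{4}=-\tfrac12\langle b\rangle^{2}(S_{1}+1/\sigma^{2})$, and $\langle b\rangle=o(1/\sqrt{\log n})$ with $S_{1}$ of order $n$ at the self-consistent scale forces $T_{4}=o(\log n)$. Summing, $F-F_{0}=\tfrac12\log n+\tfrac12\log n+o(\log n)=\log n+o(\log n)$. The hardest step is the second one rather than the Stirling bookkeeping: the flat responsibility profile is only marginally stationary, because in the self-consistency the $i$-independent part balances only at order $1/n$, so the constant $p$ is not sharply pinned. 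One must therefore verify that the $\log n$ coefficient is insensitive to the precise, data-dependent $p\in(0,1)$ — which it is, since both $T_{1}+T_{2}$ and $T_{3}$ produce $\tfrac12\log n$ with no dependence on $p$ — and one must bound $T_{4}$ sharply, as a naive estimate through $X_{\max}$ alone would be far too lossy to preserve the leading term.
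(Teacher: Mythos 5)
Your proposal follows essentially the same route as the paper's proof: both arguments pin the responsibilities at a flat profile $\hat{y}_{i1}=\alpha+\Delta y_{i}$ via the self-consistent equations, Stirling-expand the gamma ratio so that its extensive part $n\,h(\alpha)$ cancels against the entropy term while a net $\tfrac12\log n$ survives, and extract the second $\tfrac12\log n$ from $\tfrac12\log(1+\sigma^{2}S_{1})$; the paper simply carries the computation one order further so as to also record the $O(1)$ terms ($\xi^{2}$ and the $\alpha$-dependence) needed later for Theorem 3. One caution on your treatment of $T_{4}$: as literally written, $\langle b\rangle=o(1/\sqrt{\log n})$ together with $S_{1}=O(n)$ gives only $T_{4}=o(n/\log n)$, which is not $o(\log n)$; to close this step you must use, as the paper does, the self-consistency $\langle b\rangle=\sum_{j}X_{j}\hat{y}_{j1}/(S_{1}+1/\sigma^{2})$ combined with the null facts $\sum_{j}X_{j}=O_{p}(\sqrt n)$ and $\sum_{j}X_{j}^{2}=n(1+o_{p}(1))$ to conclude $\langle b\rangle=O_{p}(n^{-1/2})$ and hence $T_{4}=O_{p}(1)$ --- you allude to this (``the self-consistent scale'') but it should be made explicit, since the stated hypothesis on $\langle b\rangle$ alone is not strong enough.
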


\begin{proof}
Let us introduce  $\bar{y}$ as $\sum_{i}\hat{y_{i1}} \equiv n_{1}$ and $n_{1}/n \equiv \alpha \sim \mathcal{O}(1)$ for brevity.
The self-consistent equation of $\left\{y_{i1}\right\}$becomes
\begin{eqnarray*}
\hat{y_{i1}} &=& \frac{n_{1}e^{\langle b \rangle X_{i}-1/2\langle b^{2\rangle}}}{n-n_{1} + n_{1} e^{\langle b \rangle X_{i}-1/2\langle b^{2\rangle}}} \\
&=&  \frac{n_{1}}{n} + \frac{n_{1}}{n}\left(1-\frac{n_{1}}{n}\right) \langle b\rangle X_{i}  \\
&+& \frac{1}{2}\frac{n_{1}}{n}\langle b\rangle^{2}\left(1-\frac{n_{1}}{n}\right) \left(X_{i}^{2}-1\right) \\
&-& \left(\frac{n_{1}}{n}\right)^{2}\langle b\rangle^{2} X_{i}^{2}+ \left(\frac{n_{1}}{n}\right)^{3}\langle b\rangle^{2} X_{i}^{2} + \mathcal{O}(\langle b\rangle^{3} )\\
&=& \alpha + \alpha (1-\alpha) \langle b \rangle X_{i} \\
&+& \frac{1}{2}\alpha \langle b\rangle^{2}\left[X_{i}^{2}\left(1-3\alpha + 2\alpha^{2}\right) + \alpha-1\right] + \mathcal{O}(\langle b\rangle^{3} )
 \end{eqnarray*} 
Using this expression, $\langle b\rangle $ becomes 
\begin{eqnarray*}
\langle b\rangle &=& \frac{\sum_{j}\hat{y_{j1}}X_{j}}{\sum_{j}\hat{y_{j1}}+\frac{1}{\sigma^{2}}} \nonumber \\
&=& \frac{\sum_{i}X_{i}\left(\alpha + \left(1-\alpha\right)\langle b\rangle X_{i} + \mathcal{O}(\langle b\rangle^{2})\right)}{n_{1}+\frac{1}{\sigma^{2}}} \\
&=&\frac{1}{n}\sum X_{i} + \langle b\rangle \left(1-\alpha\right) 
\end{eqnarray*}
Therefore, we obtain 
\begin{equation}
\langle b\rangle =\frac{1}{n_{1}}\sum X_{i} = \mathcal{O}(\frac{1}{\sqrt{n}})
\end{equation}
$\hat{y_{i1}}$ becomes
\begin{eqnarray*}
\hat{y_{i1}} &=& \alpha + \frac{(1-\alpha)}{n}\sum_{j}X_{j} X_{i} \nonumber \\
&+& \frac{1}{2\alpha n^{2}}\left(\sum X_{j}\right)^{2}\left[\alpha -1 + \left(1-3\alpha + 2\alpha^{2}\right)X_{i}^{2}\right]
\end{eqnarray*}
Let us calculate the variational free energy. For simplicity,  we write $\hat{y_{i1}}$ as 
\begin{equation*}
\hat{y_{i1}} = \alpha + \Delta y_{i}
\end{equation*}
The entropy term becomes, 
\begin{eqnarray*}
& &\sum_{i}\left\{\hat{y_{i1}}\log{\hat{y_{i1}}} + (1-\hat{y_{i1}})\log{(1-\hat{y_{i1}})}\right\} \\
&=& \sum_{i}\left(\alpha + \Delta y_{i}\right)\log \left(\alpha + \Delta y_{i}\right) \\
&+& \left[1 - \left(\alpha + \Delta y_{i}\right)\right] \log \left[1 - \left(\alpha + \Delta y_{i}\right)\right] \\
&=& n\left[\alpha \log \alpha + \left(1-\alpha\right) \log \left(1-\alpha\right)\right] \\
&+& \sum \Delta y_{i}\left[\log \alpha - \log \left(1-\alpha\right)\right] + \sum_{i}\frac{1}{2}\left(\Delta y_{i}\right)^{2}\left[\frac{1}{\alpha} - \frac{1}{1-\alpha}\right] 
\end{eqnarray*}
The sum of $\Delta y_{i}$ becomes
\begin{eqnarray*}
\sum_{i} \Delta y_{i} =  o_{p}\left(\frac{1}{\sqrt{n}}\right)
 \end{eqnarray*}
and the sum of the square of $\Delta y_{i}$ becomes
\begin{eqnarray*}
& &\sum \left(\Delta y_{i}\right)^{2} = \left(1-\alpha\right)^{2} \xi^{2} + o_{p}\left(\frac{1}{\sqrt{n}}\right)
\end{eqnarray*}
Therefore, the entropy term becomes
\begin{eqnarray*}
& &\sum_{i}\left\{\hat{y_{i1}}\log{\hat{y_{i1}}} + (1-\hat{y_{i1}})\log{(1-\hat{y_{i1}})}\right\} \\
& &= n\left[\alpha \log \alpha + \left(1-\alpha\right) \log \left(1-\alpha\right)\right] + \frac{1-\alpha}{2\alpha} \xi^{2}
\end{eqnarray*}
The other terms can be calculated as follows:
\begin{eqnarray*}
& &\log{\frac{\Gamma (n+2\phi)}{\Gamma(n_{1}+\phi)\Gamma(n-n_{1}+\phi)}} \\
&=& \frac{1}{2}\log n - \left(n\alpha+\phi-\frac{1}{2}\right)\log \alpha \\
&-&\left(n(1-\alpha)+\phi - \frac{1}{2}\right)\log (1-\alpha) -\frac{1}{2}\log 2\pi+ o(1)
\end{eqnarray*}

\begin{eqnarray*}
& &\frac{1}{2}\log \left(1 + \sigma^{2}\sum y_{i1}\right) \\
&\simeq& \frac{1}{2}\left[\log n + \log \alpha + \log \sigma^{2} \right]+ \frac{1}{2}\log \left(1 + \frac{1}{n\alpha \sigma^{2}}\right) \nonumber \\
&=& \frac{1}{2}\left[\log n + \log \alpha + \log \sigma^{2} \right] + o(1)
\end{eqnarray*}

\begin{eqnarray*}
-\frac{1}{2}\frac{\left(\sum X_{i}\hat{y_{i1}}\right)^{2}}{\sum \hat{y_{i1}} + \frac{1}{\sigma^{2}}} &=& -\frac{1}{2}\langle b\rangle^{2}\left(\sum \hat{y_{i1}} + \frac{1}{\sigma^{2}}\right) \\
&=& -\frac{1}{2\alpha}\xi^{2} + o\left(1\right)
\end{eqnarray*}
By integrating them, we obtain the variational free energy
\begin{eqnarray}
F &=& \log n  +(1-\phi)\log \alpha- \left(\phi-\frac{1}{2}\right)\log (1-\alpha) + \frac{1}{2}\log \sigma^{2}  \nonumber \\
&-& \frac{1}{2}\xi^{2} + \log \frac{\Gamma(2\phi)}{\prod(\Gamma(\phi))} - \frac{1}{2}\log 2\pi + o(1)
\end{eqnarray}

From these results, we obtain
\begin{equation}
F - F_{0} = \log{n} + o(\log n),
\end{equation}
and the proof is completed.
\end{proof}

\subsection{Asymptotic form of $F$ when $\sum_{i}y_{i1}/n \rightarrow 0$}

\begin{theorem}
Let us define the function $f(y_{i})$ as,

\begin{eqnarray*}
f(y_{i}) &=& \sum_{i}\left\{\hat{y_{i1}} \log \hat{y_{i1}} + \left(1-\hat{y_{i1}}\right)\log \left(1-\hat{y_{i1}}\right)\right\}  \\
&-& \frac{\left(\sum_{i}X_{i}\hat{y_{i1}}\right)^{2}}{2\left(n_{1}+1/\sigma^{2}\right)}
\end{eqnarray*}
When we fix $\sum_{i}\hat{y_{i1}} = n_{1}$,  the minimum of $f(\hat{y_{i1}})$ satisfies
\begin{equation}
f(y_{i}) = -n_{1}\log n + n_{1} \log n_{1} - n_{1} + o(1)
\end{equation}
and $F-F_{0}$ becomes
\begin{equation}
F-F_{0} = \phi\log \frac{n}{n_{1}} + \log n_{1} + \mathcal{O}_{p}(1)
\end{equation}

\end{theorem}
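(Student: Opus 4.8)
The plan is to read the Dirichlet-prior free energy as a configuration-dependent part plus terms that depend on $\{\hat{y}_{i1}\}$ only through $n_{1}=\sum_{i}\hat{y}_{i1}$. Writing $\hat{y}_{i0}=1-\hat{y}_{i1}$ and noting $\sum_{i}\hat{y}_{i0}=n-n_{1}$, we have
\[
F-F_{0}=f(\hat{y}_{i1})+G+\tfrac{1}{2}\log\!\big(1+\sigma^{2}n_{1}\big)-\log\tfrac{\Gamma(2\phi)}{\Gamma(\phi)^{2}},\qquad G:=\log\frac{\Gamma(n+2\phi)}{\Gamma(n_{1}+\phi)\,\Gamma(n-n_{1}+\phi)},
\]
where $f$ is exactly the entropy-plus-quadratic functional in the statement. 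So I would first minimise $f$ at fixed $n_{1}$, then expand the remaining $n_{1}$-only terms by Stirling and add the pieces.

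For the minimisation, set $c=n_{1}/n$, $N=n_{1}+1/\sigma^{2}$, and $h(t)=t\log t+(1-t)\log(1-t)$, so that $f=\sum_{i}h(\hat{y}_{i1})-Q$ with $Q=(\sum_{i}X_{i}\hat{y}_{i1})^{2}/(2N)$. Completing the square writes $Q$ as a maximum over an auxiliary variable $b$, giving the dual form $\min f=\min_{b}g(b)$ with $g(b)=\tfrac{N}{2}b^{2}+V(b)$, where $V(b)=\min_{\sum\hat{y}=n_{1}}\sum_{i}\big(h(\hat{y}_{i1})-bX_{i}\hat{y}_{i1}\big)$ is concave in $b$ and whose minimiser is the logistic form $\hat{y}_{i1}=(1+e^{-(\nu+bX_{i})})^{-1}$, with $\nu$ fixed by $\sum_{i}\hat{y}_{i1}=n_{1}$; thus $b$ plays the role of $\langle b\rangle$. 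At $b=0$ the configuration is uniform, $\hat{y}_{i1}\equiv c$, and a direct computation gives $g(0)=n\,h(c)=-n_{1}\log n+n_{1}\log n_{1}-n_{1}+\mathcal{O}(n_{1}^{2}/n)$, the claimed value of $\min f$.

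It then remains to show the true minimiser of $g$ lies within $o(1)$ of $g(0)$, and here the curvature at the origin is decisive. One computes $g'(0)=V'(0)=-c\sum_{i}X_{i}=\mathcal{O}_{p}(n_{1}/\sqrt{n})$, while $V''(0)=-c(1-c)\sum_{i}(X_{i}-\bar{X})^{2}\approx-n_{1}$ cancels the bulk of $N$, leaving $g''(0)=N+V''(0)\approx 1/\sigma^{2}=\mathcal{O}(1)$. Hence the stationary point sits at $b^{*}\approx\sigma^{2}c\sum_{i}X_{i}=\mathcal{O}_{p}(n_{1}/\sqrt{n})$ with energy gain only $\tfrac{1}{2}g'(0)^{2}/g''(0)=\mathcal{O}_{p}(n_{1}^{2}/n)$. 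This confirms $\langle b\rangle=b^{*}$ is small (consistent with $\langle b\rangle X_{\max}=o_{p}(1)$ since $X_{\max}=\mathcal{O}(\sqrt{\log n})$) and $Q=\tfrac{1}{2}\langle b\rangle^{2}N=o_{p}(1)$, so $\min f=n\,h(c)+o(1)=-n_{1}\log n+n_{1}\log n_{1}-n_{1}+o(1)$, provided $n_{1}=o(\sqrt{n})$ so that $n_{1}^{2}/n=o(1)$. For the second display I would expand $G$ by $\log\Gamma(x)=(x-\tfrac{1}{2})\log x-x+\tfrac{1}{2}\log 2\pi+o(1)$: the $\log n$ coefficients of the three factors combine to $n_{1}+\phi$, the $\log n_{1}$ coefficient to $-(n_{1}+\phi-\tfrac{1}{2})$, and the linear parts cancel to leave $+n_{1}$, giving $G=(n_{1}+\phi)\log(n/n_{1})+n_{1}+\tfrac{1}{2}\log n_{1}+\mathcal{O}(1)$. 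Combining with $\tfrac{1}{2}\log(1+\sigma^{2}n_{1})=\tfrac{1}{2}\log n_{1}+\mathcal{O}(1)$ and $\min f$, the $n_{1}\log(n/n_{1})$ terms and the $\mp n_{1}$ terms cancel and the two $\tfrac{1}{2}\log n_{1}$ add, yielding $F-F_{0}=\phi\log(n/n_{1})+\log n_{1}+\mathcal{O}_{p}(1)$.

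The hard part will be the rigorous lower bound: confirming that the near-uniform stationary point is the \emph{global} minimiser of $g$ (equivalently of $f$), since $V$ is merely concave and $g$ need not be globally convex once $b$ grows large enough to concentrate the weights. Ruling out a competing concentrated configuration requires order-statistic control to show that its quadratic gain, of size $\approx n_{1}\log(n/n_{1})$, is strictly outweighed by its entropy cost of size $n_{1}$; the delicate cancellation producing $g''(0)=1/\sigma^{2}$, a near-flat direction reflecting the singularity of the model, is exactly what must be handled with care, as is the regime restriction $n_{1}=o(\sqrt{n})$ needed for the $o(1)$ remainder in the first display.
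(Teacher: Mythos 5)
Your decomposition of $F-F_{0}$ into $f+G+\tfrac{1}{2}\log(1+\sigma^{2}n_{1})$ minus the prior normalisation is exactly the one the paper uses, your Stirling bookkeeping for $G$ reproduces the paper's expansion $(n_{1}+\phi)\log n-(n_{1}+\phi-\tfrac12)\log(n_{1}+\phi)-(n-n_{1})\log(1-n_{1}/n)+\mathcal{O}(1)$, and your evaluation of $f$ at the near-uniform configuration gives the stated value. Where you genuinely diverge from the paper is the minimisation device: you dualise the quadratic term to reduce the problem to a one-dimensional function $g(b)=\tfrac{N}{2}b^{2}+V(b)$ and do a local curvature analysis at $b=0$, exposing the near-cancellation $N+V''(0)\approx 1/\sigma^{2}$; the paper instead applies Lagrange multipliers directly to $\{\hat{y}_{i1}\}$, obtains the logistic form $\hat{y}_{i1}=(1+e^{-A(X_{i}-B)})^{-1}$, and then splits $f$ into three pieces according to whether $X_{i}$ lies above or below $B/Z$, bounding each piece by convexity. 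Your route is cleaner for the local analysis and has the additional merit of making explicit the restriction $n_{1}=o(\sqrt{n})$ needed for the $o(1)$ remainder, a condition the paper never states.

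The genuine gap is the one you yourself flag: the global lower bound. Your argument as written only shows that the near-uniform stationary point achieves $-n_{1}\log n+n_{1}\log n_{1}-n_{1}+o(1)$, i.e.\ it gives an \emph{upper} bound on $\min f$; nothing rules out a concentrated configuration (weights $\approx 1$ on the largest order statistics) or an intermediate one doing better. This is not a routine technicality, because the leading terms cancel exactly: the concentrated configuration has entropy $\approx 0$ and quadratic gain $\approx -\bigl(n_{1}\sqrt{2\log(n/n_{1})}\bigr)^{2}/(2n_{1})=-n_{1}\log(n/n_{1})$, while the uniform one sits at $-n_{1}\log(n/n_{1})-n_{1}$, so the two candidates differ only at order $n_{1}$ and the comparison hinges on the sharp constant $\sqrt{2}$ in the trimmed-sum asymptotics. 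This is precisely what the paper's Appendix Lemma 1 supplies (the bound $0<A\le\sqrt{2\log(n/n_{1})}$ on the slope of the logistic profile), and what its $f_{1}+f_{2}+f_{3}$ splitting with the $\alpha,\beta$ bookkeeping is designed to exploit in order to show that $\alpha=1,\beta=0$ is optimal. To complete your version you would need the analogous global control of $g(b)$ for $b$ of order $\sqrt{2\log(n/n_{1})}$, namely $V(b)\ge -b\,n_{1}\sqrt{2\log(n/n_{1})}-n_{1}\log(n/n_{1})-n_{1}+o(n_{1})$ uniformly, which again comes down to the order-statistic lemma; without some such ingredient the first display of the theorem is not established.
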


\begin{proof}
In this case, the leading order of the logarithm of the ratio of the gamma function is different from the previous case. It becomes

\begin{eqnarray*}
& &\frac{\log{\Gamma(n+2\phi)}}{\prod \log{\Gamma(\sum_{i}\hat{y_{ik}})+\phi)}} = ( n_{1} + \phi)\log n \\
&-& (n_{1} + \phi - \frac{1}{2})\log (n_{1}+\phi) -(n-n_{1})\log \left(1-\frac{n_{1}}{n}\right) + \mathcal{O}(1) \nonumber \\
\end{eqnarray*}
where we define $\sum_{i}\hat{y_{i1}} \equiv n_{1}$.\\
Applying the method of Lagrange multipliers,  we minimize the function as follows:
\begin{eqnarray*}
f_{1}(\hat{y_{i1}}) &=& \sum_{i}\left\{\hat{y_{i1}} \log \hat{y_{i1}} + \left(1-\hat{y_{i1}}\right)\log \left(1-\hat{y_{i1}}\right)\right\} \\
&-& \frac{\left(\sum_{i}X_{i}\hat{y_{i1}}\right)^{2}}{2\left(n_{1}+1/\sigma^{2}\right)} - \lambda \left(\sum_{i}\hat{y_{i1}} - n_{1}\right)
\end{eqnarray*}
The equation of the stationary condition is given as
\begin{equation}
\frac{\partial f_{1}}{\partial \hat{y_{i1}}}= \log \frac{\hat{y_{i1}}}{1-\hat{y_{i1}}} - \frac{\sum_{j}X_{j}\hat{y_{j1}} X_{i}}{\left(n_{1} + 1/\sigma^{2}\right)} - \lambda = 0
\end{equation}
By solving it with $y_{i}$, we obtain
\begin{equation}
\hat{y_{i1}} = \frac{1}{1+ \exp(-A (X_{i}-B))}
\end{equation}
where
\begin{eqnarray*}
A &\equiv& \frac{\sum_{j}X_{j}\hat{y_{j1}}}{\left(n_{1} + 1/\sigma^{2}\right)} \\
B&\equiv& \frac{\lambda}{A}
\end{eqnarray*}
Let us assume that $A>0$, and $X_{1}\le X_{2}\le ... \le X_{n}$. This assumption does not lose the generality.
From the result of lemma 1 the proof of which is provided in the appendix, the asymptotic form of the trimmed sum of $X_{i}$ becomes
\begin{equation}
\sum_{i = n-n_{1}+1}^{n}X_{(i)n} \rightarrow n_{1}\sqrt{2\log \frac{n}{n_{1}}}
\end{equation}
where $X_{(i)n}$ means the $i$th order statistics, and $\left\{X_{(i)n}\right\}$ satisfies
\begin{equation*}
X_{(1)n} \le X_{(2)n} \le ... \le X_{(n)n}
\end{equation*}
Using this, we can obtain 
\begin{equation*}
0 < A \le \sqrt{2\log \frac{n}{n_{1}}}
\end{equation*}
As $\sum_{i}\hat{y_{i1}} = n_{1}$, and $\lim_{n\rightarrow \infty} \frac{n_{1}}{n} = 0$, the number of $\hat{y_{i1}}$ that satisfies $\hat{y_{i1}} > 1/2$ should not be $\mathcal{O}(n)$. 
Therefore, $B$ should go to $\infty$ when $n \rightarrow \infty$.\\
Let $Z$ be a constant that satisfies $Z \rightarrow \infty$ and $\frac{B}{Z} \rightarrow \infty$.\\
We will write the number of $\hat{y_{i1}}$ that satisfies $X_{i} < B/Z$ as $\alpha n_{1}$, and $\beta \equiv 1-\alpha$.
\begin{equation}
|X_{i}| \le B/Z \Rightarrow \hat{y_{i1}} = \frac{1}{1+ \exp(-A X_{i}+AB)} \sim \exp(-AB)
\end{equation}
Let us split $f(\hat{y_{i1}})$ into the three parts, 
\begin{equation}
f(\hat{y_{i1}}) = f_{1}(\hat{y_{i1}}) + f_{2}(\hat{y_{i1}}) + f_{3}(\hat{y_{i1}}),
\end{equation}
where
\begin{equation}
f_{1} = \sum_{X_{i}<B/Z}\hat{y_{i1}}\log \hat{y_{i1}} + (1-\hat{y_{i1}})\log (1-\hat{y_{i1}})
\end{equation}

\begin{equation}
f_{2} = \sum_{X_{i}>B/Z}\hat{y_{i1}}\log \hat{y_{i1}} + (1-\hat{y_{i1}})\log (1-\hat{y_{i1}})
\end{equation}

\begin{equation}
f_{3} = -\frac{\left(\sum_{i}X_{i}\hat{y_{i1}}\right)^{2}}{2\left(n_{1}+1/\sigma^{2}\right)}
\end{equation}
From the convexity, $f_{1}$ satisfies the following inequality:

\begin{eqnarray*}
f_{1} &\ge& n \left[\frac{\alpha n_{1}}{n}\log \frac{\alpha n_{1}}{n} + \left(1-\frac{\alpha n_{1}}{n}\right)\log \left(1- \frac{\alpha n_{1}}{n}\right)\right] \\
&=& -\alpha n_{1} \log n + \alpha n_{1} \log \alpha n_{1} - \alpha n_{1}+ \alpha^{2}\frac{n_{1}^{2}}{n}.
\end{eqnarray*}
Also, because the minimum of the function $g(y) = y \log y + (1-y)\log (1-y)$ is $g(y=1/2) = -\log 2$, $f_{2}$ satisfies
\begin{equation}
f_{2} = \sum_{X_{i}>B/Z} \hat{y_{i1}}\log \hat{y_{i1}} + (1-\hat{y_{i1}})\log (1-\hat{y_{i1}}) \ge -\beta n_{1}\log 2
\end{equation}
As for $f_{3}(\hat{y_{i1}})$, the term $\sum_{i}X_{i}\hat{y_{i1}}$ in the numerator satisfies

\begin{eqnarray*}
\sum_{i}X_{i}\hat{y_{i1}} &=& \sum_{X_{i}>B/Z}X_{i}\hat{y_{i1}} + \sum_{X_{i}<B/Z}X_{i}\hat{y_{i1}} \\
&\le& \left(\sum_{X_{i}>B/Z}X_{i} \right)+ \sum_{X_{i}<B/Z}X_{i}\hat{y_{i1}} \\
&=& \beta n_{1}\sqrt{2\log \frac{n}{\beta n_{1}}} + \sum_{i}X_{i}\alpha \frac{n_{1}}{n}
\end{eqnarray*}
In the last line, we use the result, 
\begin{equation}
\alpha n_{1} = n\exp \left(-AB\right) .
\end{equation}
Therefore,

\begin{equation}
\sum_{i}X_{i}\hat{y_{i1}} \le \alpha \frac{n_{1}}{n}\sum_{X_{i}<B/Z}X_{i} + \beta n_{1}\sqrt{2\log \frac{n}{\beta n_{1}}}
\end{equation}
The condition in which equality is satisfied is $\alpha=1, \beta=0$, and 
\begin{equation}
f_{3}(\hat{y_{i1}}) \ge -\frac{1}{2}\frac{\left(\alpha n_{1}\frac{1}{n}\sum_{X_{i}<B/Z} X_{i}\right)^{2}}{\left(n_{1} + 1/\sigma^{2}\right)}
\end{equation}
We can see that the $\alpha$ and $\beta$ that gives the maximum of $f_{1} + f_{2}$ under $\alpha + \beta = 1$ are also $\alpha=1, \beta=0$.\\
Therefore,  we obtain 
\begin{equation}
f(\hat{y_{i1}}) \ge -n_{1}\log n + n_{1} \log n_{1} - n_{1} + o(1)
\end{equation}
By adding the log gamma term to it, we obtain the minimum of the variational free energy, 
 \begin{equation}
F-F_{0} = \phi\log \frac{n}{n_{1}} + \log n_{1} + \mathcal{O}_{p}(1)
\end{equation}

\end{proof}
From the results of Theorem 1 and Thereom 2, we can obtain the asymptotic behavior of the variational free energy as a function of $\phi$ within the $\mathcal{O}(\log n)$ as
\[
  F - F_{0} = \left\{ \begin{array}{ll}
     \phi \log n + o\left(\log n\right)& (\phi < 1) \\
    \log n+ o\left(\log n\right)& (otherwise)
  \end{array} \right.
\]
This clearly shows that the phase transition exists in our model, and the critical value of the hyperparameter $\phi_{\rm{cr}}$ is $\phi_{\rm{cr}} = 1$.
Note that this result and the critical value are different from those obtained in the previous study\cite{WatanabeWatanabe2006}, because our model and theirs have different parameter space.  
 
We should also note that the configuration of $\left\{\hat{y_{i1}}\right\}$ of the solution is clearly different depending on the value of the hyperparameter. When the $\phi \ge 1$, the solution satisfies $\sum \hat{y_{i1}}\sim\mathcal{O}(1)$. This means that the sample is described under the A. H. by the two clusters that have a mixture ratio of the same order, but the mean of the one cluster may slightly deviate from the origin. The hypothesis test scheme based on this can be regarded testing whether the number of the cluster is one or not.
 
In contrast, when $\phi <1$,  the $\sum \hat{y_{i1}}$ obtained as the solution is small. This means that the vast majority of the sample is described under the A. H. by the one cluster whose mean is located in the origin. The other cluster may have an arbitrary mean, but the mixture ratio is very small. Under such circumstances, the hypothesis test scheme based on this can be regarded as testing for the existence of outliers.
 
Our result shows that we should choose an appropriate hyperparameter suitable for the purpose.

\section{Asymptotic form of the variational free energy on  the $\mathcal{O}(1)$}
In this section, we consider a situation in which it is difficult to distinguish whether or not a sample is generated from one cluster. One such a situation is that in which $n\langle a \rangle$ is $\mathcal{O}(n)$, but $\langle b \rangle $ is close to $0$. From the discussion in the previous section, this corresponds to the case in which $\phi >1$.

The following theorem on the asymptotic form of the variational free energy is derived under the above assumption.

\begin{theorem}
The variational free energy of the two component Gaussian mixture becomes 
\begin{eqnarray}
F &=& \log n  -\left(\phi -1 \right) \log \left(\phi -1 \right) - \left(\phi -\frac{1}{2} \right) \log \left(\phi -\frac{1}{2} \right) \nonumber \\ 
&+& \left(2\phi -\frac{3}{2} \right) + \frac{1}{2}\log \sigma^{2} - \frac{1}{2}\xi^{2} + \log \frac{\Gamma(2\phi)}{\prod(\Gamma(\phi))} \nonumber \\ 
&-& \frac{1}{2}\log 2\pi + o(1)
\end{eqnarray}
when the hyperparameter satisfies $\phi > 1$. 
Here, $\xi$  is a probabilistic variable that obeys $\xi \sim \mathcal{N}(0, 1^{2})$. 
\end{theorem}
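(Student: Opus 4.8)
The plan is to reduce Theorem 3 to an optimization over a single scalar, the posterior mixing ratio, using the constant-order expression for the variational free energy already obtained inside the proof of Theorem 1. The hypotheses of Theorem 3 ($n\langle a\rangle=\mathcal{O}(n)$, $\langle b\rangle$ near $0$) place us exactly in the regime $\sum_i\hat y_{i1}=n_1=\mathcal{O}(n)$ treated there, and the condition $\phi>1$ guarantees this configuration is the global minimizer (the competing $n_1=o(n)$ branch of Theorem 2 yields the strictly larger leading value $\phi\log n$). Writing $\alpha=n_1/n$, the proof of Theorem 1 gives, to order $o(1)$,
\[
F=\log n+(1-\phi)\log\alpha-\Bigl(\phi-\tfrac12\Bigr)\log(1-\alpha)+\tfrac12\log\sigma^2-\tfrac12\xi^2+\log\frac{\Gamma(2\phi)}{\prod\Gamma(\phi)}-\tfrac12\log2\pi+o(1),
\]
so the only quantity left undetermined at order $\mathcal{O}(1)$ is $\alpha$. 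The first step is therefore to fix $\alpha$ by minimizing this expression (equivalently by the stationarity/self-consistency condition), and then to substitute the minimizer back.

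Next I would study $g(\alpha)=(1-\phi)\log\alpha-(\phi-\tfrac12)\log(1-\alpha)$. For $\phi>1$ we have $1-\phi<0$ and $\phi-\tfrac12>0$, so $g(\alpha)\to+\infty$ as $\alpha\to0^+$ and as $\alpha\to1^-$; since $g''(\alpha)=\frac{\phi-1}{\alpha^2}+\frac{\phi-1/2}{(1-\alpha)^2}>0$, the function is strictly convex and has a unique interior minimizer that is $\mathcal{O}(1)$. This is precisely where $\phi>1$ enters, and it is consistent with $\phi_{\mathrm{cr}}=1$: for $\phi<1$ the $\alpha\to0$ divergence reverses and the optimum is driven to the other phase. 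Solving $g'(\alpha)=\frac{1-\phi}{\alpha}+\frac{\phi-1/2}{1-\alpha}=0$ gives $\alpha^{\ast}=\frac{\phi-1}{2\phi-3/2}$ and $1-\alpha^{\ast}=\frac{\phi-1/2}{2\phi-3/2}$.

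The third step is the substitution $\alpha=\alpha^{\ast}$. The $\alpha$-dependent part collapses to $-(\phi-1)\log(\phi-1)-(\phi-\tfrac12)\log(\phi-\tfrac12)$ together with the remainder proportional to $\log(2\phi-\tfrac32)$, which reproduces the stated constant-order expression; the pieces $\tfrac12\log\sigma^2$, $-\tfrac12\xi^2$, $\log\frac{\Gamma(2\phi)}{\prod\Gamma(\phi)}$ and $-\tfrac12\log2\pi$ carry through unchanged. The distributional assertion $\xi\sim\mathcal{N}(0,1^2)$ is inherited directly from Theorem 1: under the null $X_i\sim\mathcal{N}(0,1)$ are i.i.d., so $\tfrac1{\sqrt n}\sum_iX_i$ converges in distribution to $\xi$ by the central limit theorem, and $-\tfrac12\xi^2$ is the only randomness surviving at order $\mathcal{O}(1)$.

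The main obstacle I anticipate is not the substitution but the bookkeeping of the $\mathcal{O}(1)$ terms in the optimization step. When the stationarity condition for $\alpha$ is derived from the full free energy, the term $\tfrac12\log(1+\sigma^2n_1)$ contributes at order $\mathcal{O}(1)$ to $\partial F/\partial\alpha$ (its derivative is $\tfrac1{2\alpha}$, not negligible), and it is exactly this contribution that shifts the optimum away from the naive symmetric value $\alpha=\tfrac12$ to $\alpha^{\ast}$. One must simultaneously retain the digamma/Stirling corrections of all three Gamma functions to constant order and the second-order term of the entropy expansion (the source of $-\tfrac12\xi^2$), and verify that the neglected $\langle b\rangle$-dependent corrections are genuinely $o(1)$ under $\langle b\rangle=o(1/\sqrt{\log n})$. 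Reconciling these constant-order contributions so that they combine consistently is the delicate part; once $\alpha^{\ast}$ is correctly pinned down, the closed form follows by direct evaluation.
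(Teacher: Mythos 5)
Your proposal follows essentially the same route as the paper: it takes the constant-order expression for $F$ derived in the proof of Theorem~1, minimizes over $\alpha$ (the paper likewise invokes $\alpha_0=\mathrm{argmin}\,F(\alpha)=\frac{\phi-1}{2\phi-3/2}$ from $dF/d\alpha=0$, while you add the useful observation that strict convexity for $\phi>1$ guarantees a unique interior minimizer), and substitutes back. One small point worth noting: your own computation correctly shows the remaining term is $(2\phi-\tfrac32)\log(2\phi-\tfrac32)$, whereas the theorem as stated (and the paper's proof) writes only $(2\phi-\tfrac32)$, so you should not claim this "reproduces the stated expression" — it corrects it.
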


\begin{proof}
As proven in Theorem 1,  the variational free energy becomes
\begin{eqnarray}
F &=& \log n  +(1-\phi)\log \alpha- \left(\phi-\frac{1}{2}\right)\log (1-\alpha) + \frac{1}{2}\log \sigma^{2}  \nonumber \\
&-& \frac{1}{2}\xi^{2} + \log \frac{\Gamma(2\phi)}{\prod(\Gamma(\phi))} - \frac{1}{2}\log 2\pi + o(1)
\end{eqnarray}
From the variational principle, $\alpha$ is determined as 
\begin{equation}
\alpha = \rm{argmin}\left[F(\alpha)\right] \equiv \alpha_{0}
\end{equation}
$\alpha_{0}$ is the solution of $\frac{d F}{d \alpha} = 0$, that is,
\begin{equation}
\alpha_{0} = \frac{\phi -1}{2\phi - \frac{3}{2}}
\end{equation}
By substituting this into $F$, we can obtain 
\begin{eqnarray}
F &=& \log n -\left(\phi -1 \right) \log \left(\phi -1 \right) - \left(\phi -\frac{1}{2} \right) \log \left(\phi -\frac{1}{2} \right) \nonumber \\
&+& \left(2\phi -\frac{3}{2} \right) + \frac{1}{2}\log \sigma^{2} - \frac{1}{2}\xi^{2} + \log \frac{\Gamma(2\phi)}{\prod(\Gamma(\phi))} \nonumber \\
&-& \frac{1}{2}\log 2\pi + o(1)
\end{eqnarray}
This is the result that we want to derive.\\
\end{proof}
In Figure \ref{fig:phi-alpha0}, $\alpha_{0}$  is plotted as a function of $\phi$.We can see that $\alpha_{0}$ shows the power-law behavior around the critical point $\phi_{\rm{cr}} = 1$, from the form of $\alpha_{0}$ derived above. That is, 
\begin{equation}
\alpha_{0} \sim \left(\phi-\phi_{\rm{cr}}\right)^{-1}
\end{equation}

\begin{figure}[H]
\begin{center}
\includegraphics[width=11cm]{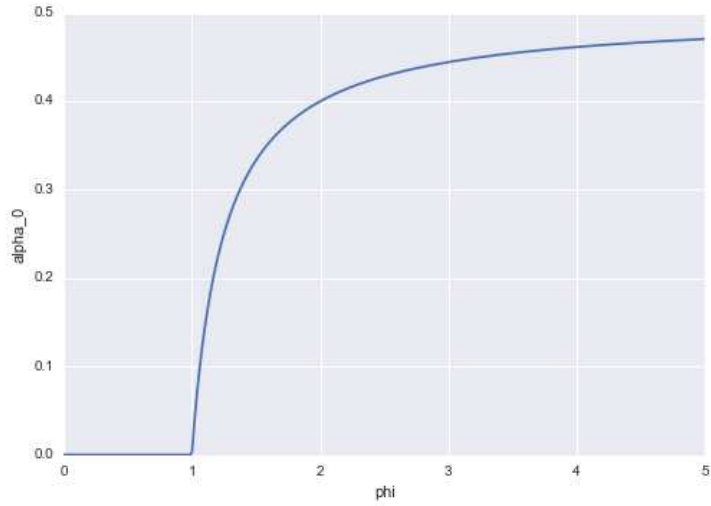}
\caption{Variational parameter $\alpha_{0}$ that minimizes variational free energy  $F$ as function of the hyperparameter $\phi$.}
\label{fig:phi-alpha0}
\end{center}
\end{figure}

Figure \ref{fig:phi-alpha0} shows that the stochastic behavior of the variational free energy is determined by $\xi$. Under the N. H., $\xi$ follows a standard normal distribution and the distribution of the variational free energy can be described by a $\chi^{2}$ distribution. The validity of these results are examined in the next section.

\newpage

\section{Numerical Experiment}

In this section, we show the result of our numerical experiments to examine the validity of our theoretical results.\\
First, to see the validity of the asymptote for a finite sample size, we compared the asymptote with one that is numerically calculated by an iterative algorithm, the variational Bayes-EM (VB-EM) algorithm.
We set the hyperparameter as a sufficiently large value, $\phi=20$, and calculated the asymptote in cases in which $n=200, 400,800,1600,3200,6400$ cases. To see the variance, we calculated them for $100$ different sample set. The results are shown in figure  \ref{fig:comparison-VB-asymptote}. We can see the theoretical asymptote and the numerical result  match well as a distribution. 

\begin{figure}[H]
\begin{center}
\includegraphics[width=11cm]{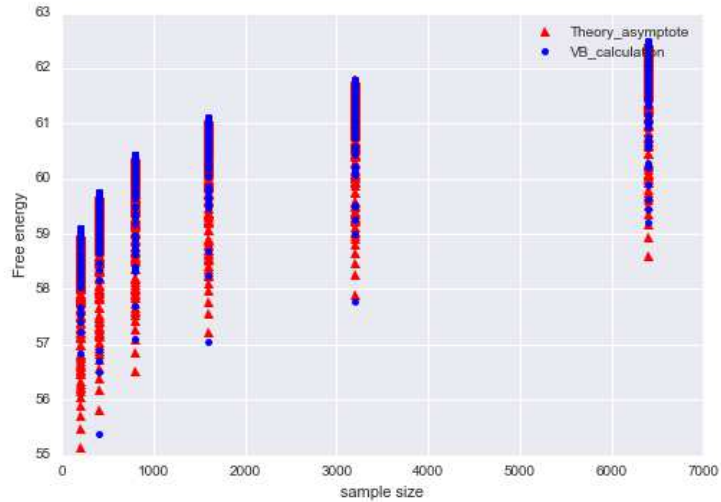}
\caption{Comparison of variational free energy calculated by VB-EM algorithm with the asymptote we derived for different sample sizes. Red triangles corresponds to the variational free energy calculated from theoretically derived asymptote, and blue circles corresponds to variational free energy numerically calculated by variational Bayes.}
\label{fig:comparison-VB-asymptote}
\end{center}
\end{figure}

We also numerically calculated the rejection rates for a finite sample with the VB-EM algorithm, and compared it with the threshold determined from the asymptote we derived.  We  numerically calculated the variational free energy for many sample sets independently generated from the null hypothesis, and determined the rejection rate as the ratio of the number of the sample set whose variational free energy becomes less than the threshold to the total number of the sample sets. 

Through the numerical experiments, the hyperparameter was set as $\phi = 20$ and we evaluated the variational free energy for the 5000 sample sets generated from the null hypothesis.  

The results are summarized in the table \ref{tab:rejection rates}. The results show that the threshold derived from the asymptote functions correctly. Therefore, we can conclude that the asymptotic form of the variational free energy we derived is valid.

\begin{table}[H]
\begin{center}

\caption{Rejection rates  calculated numerically by variational Bayes. Threshold is calculated from the asymptote of the variational free energy analytically derived in previous section.}

\vspace{5mm}
 
 \begin{tabular}{|c||c|r|r|} \hline
 \multicolumn{1}{|c||}{sample size}
 & \multicolumn{3}{|c|}{rejection rates} \\\hline 
 $n$ & 10\% & 5\% & 1\% \\\hline
 100 & 7.5\%& 3.6\%& 0.8\% \\\hline
 200 & 8.5\%&  4.3\%& 0.8\% \\\hline
 400 & 9.0\%& 4.4\%& 0.8\% \\\hline
 800 & 9.9\%& 5.1\%& 1.1\% \\\hline
 \end{tabular}
 \label{tab:rejection rates}
 \end{center}
\end{table}

From the results we have shown,  now the hypothesis test of homogeneity based on variational Bayes, which we refer it to as the VB test, can be constructed as follows.

First, calculate the variational free energy from the sample numerically by VB-EM algorithm. In this procedure, the hyperparameter $\phi$ should be set as greater than one.

Second, test whether the variational free energy is below the threshold or not, derived from the asymptote we derived in Section 5.  The stochastic behavior of the asymptote is described by the square of the standard normal distribution, and it is easy to calculate the threshold for the rejection rates one needs, by combining the well-known behavior of the $\chi^{2}$ distribution and the asymptote we derived.

\newpage

\section{Conclusion}

We discussed a new hypothesis test for the homogeneity using variational Bayes. We derived the variational free energy of the normal mixture model and showed that the phase transition occurs when the hyperparameter $\phi$ in the prior. exceeds the critical value $\phi > \phi_{cr} = 1$.  We also derived the analytical asymptote of the variational free energy on the $\mathcal{O}(1)$ in the $\phi > 1$ phase. This enabled us to construct a new approach to testing for homogeneity, the VB test.\\
The application of variational Bayes for hypothesis tests is not limited to the problem we discussed in this paper. As future problems, it would also be interesting to construct hypothesis tests for other singular models, using this framework.

\appendix
\section{Proof of the Lemma 1}
\begin{lemma}
Let $X_{1}, X_{2}, ...X_{n}$ be an i.i.d sample generated from the standard normal distribution $\mathcal{N}(0,1^{2})$, and let $X_{(i),n}$ be the order statistics of the sample, that satisfies $X_{(1),n} \le X_{(2),n} \le ... \le X_{(n),n}$.

Let us consider the trimmed sum of the largest $n_{1}$th data from the sample. When $n_{1} \rightarrow \infty$ and $n_{1}/n\rightarrow 0$, the asymptotic behavior of the sum is

\begin{equation}
S = \sum_{i=n-n_{1}+1}^{n}X_{(i),n} \rightarrow \sqrt{2\log \frac{n}{n_{1}}} + o_{p}(n_{1})
\end{equation}
\end{lemma}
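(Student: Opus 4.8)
The plan is to recast the trimmed sum as a sum over the sample points exceeding a data-determined threshold, and then to replace that random threshold by the matching population quantile. Setting $t_n := X_{(n-n_1),n}$, we have $S = \sum_{i=1}^{n} X_i \mathbf{1}\{X_i > t_n\}$ (ties occur with probability zero for a continuous law). The deterministic proxy for $t_n$ is the upper quantile $x_n^\ast := \Phi^{-1}(1 - n_1/n)$, where $\Phi$ is the standard normal distribution function, since a fraction $n_1/n$ of the mass lies above $x_n^\ast$. First I would pin down $x_n^\ast$: because $n_1/n \to 0$ we have $x_n^\ast \to \infty$, so the Mills-ratio estimate $1 - \Phi(x) = \frac{1}{x\sqrt{2\pi}}\,e^{-x^2/2}\bigl(1 + O(x^{-2})\bigr)$ applies. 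Setting $1 - \Phi(x_n^\ast) = n_1/n$ and taking logarithms gives $(x_n^\ast)^2/2 = \log(n/n_1) - \log\!\bigl(x_n^\ast\sqrt{2\pi}\bigr) + o(1)$, hence $x_n^\ast = \sqrt{2\log(n/n_1)}\,(1 + o(1))$, which is the leading factor claimed.

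Next I would obtain the leading behaviour of $S$ through its mean with the threshold frozen at $x_n^\ast$. Integration by parts gives $\mathbb{E}\bigl[X\,\mathbf{1}\{X > x_n^\ast\}\bigr] = \frac{1}{\sqrt{2\pi}}\,e^{-(x_n^\ast)^2/2}$, i.e. the normal density at $x_n^\ast$, and the same Mills-ratio identity rewrites this density as $x_n^\ast\,(1 - \Phi(x_n^\ast))\,(1+o(1)) = x_n^\ast\,(n_1/n)\,(1+o(1))$. Multiplying by $n$ yields $\mathbb{E}\bigl[\sum_i X_i \mathbf{1}\{X_i > x_n^\ast\}\bigr] = n_1 x_n^\ast (1+o(1)) = n_1\sqrt{2\log(n/n_1)}\,(1+o(1))$, the asserted leading term.

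It then remains to show that the fluctuations are of smaller order. For the frozen threshold, a second integration by parts gives $\mathbb{E}\bigl[X^2\mathbf{1}\{X>x_n^\ast\}\bigr] = x_n^\ast\cdot\tfrac{1}{\sqrt{2\pi}}e^{-(x_n^\ast)^2/2} + (1-\Phi(x_n^\ast)) = O\!\bigl((n_1/n)\,\log(n/n_1)\bigr)$, so $\operatorname{Var}\bigl(\sum_i X_i\mathbf{1}\{X_i>x_n^\ast\}\bigr) \le n\,\mathbb{E}\bigl[X^2\mathbf{1}\{X>x_n^\ast\}\bigr] = O\!\bigl(n_1\log(n/n_1)\bigr)$; the standard deviation $O\!\bigl(\sqrt{n_1\log(n/n_1)}\bigr)$ is then smaller than the leading term by the factor $\sqrt{\log(n/n_1)/n_1}$. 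To pass from $x_n^\ast$ back to the random $t_n$, I would control the exceedance count $N_n := \#\{i : X_i > x_n^\ast\}$, which is $\mathrm{Binomial}(n, n_1/n)$ with mean $n_1$ and standard deviation $\sim\sqrt{n_1}$; this gives $t_n = x_n^\ast + o_p(1)$, and the $O_p(\sqrt{n_1})$ points lying between $t_n$ and $x_n^\ast$ each contribute about $x_n^\ast$, a correction of order $\sqrt{n_1}\,x_n^\ast$.

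The hard part will be the bookkeeping in this intermediate-order-statistics regime $n_1\to\infty$, $n_1/n\to 0$, where neither the central limit theorem for sample quantiles nor the classical extreme-value limit applies verbatim; the cleanest rigorous route is a Bahadur-type representation for $t_n$ together with a uniform control of the empirical tail on a shrinking neighbourhood of $x_n^\ast$. I would also note that the precise form of the remainder depends on the growth rate of $n_1$: all of the corrections above are $o_p$ of the leading term $n_1\sqrt{2\log(n/n_1)}$, and they reduce to the stated $o_p(n_1)$ provided $\log(n/n_1) = o(n_1)$, so the robust formulation of the conclusion is the multiplicative one $S = n_1\sqrt{2\log(n/n_1)}\,(1+o_p(1))$.
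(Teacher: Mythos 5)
Your proposal is correct in outline but takes a genuinely different route from the paper. The paper's proof works directly with the order statistics: it invokes the asymptotic normality of intermediate order statistics under the von Mises conditions to get $X_{(n-n_1+1),n}\approx a_n=F^{-1}(1-n_1/n)$, then blocks the top $n_1$ order statistics into $T$ groups (eventually $T=\sqrt{n_1}$), bounds every element of a block by that block's maximum, and sums the resulting bounds; this yields the upper bound $S\le n_1\sqrt{2\log(n/n_1)}+o_p(n_1)$, which is the direction actually used in Theorem~2. You instead rewrite $S$ as a threshold-exceedance sum $\sum_i X_i\mathbf{1}\{X_i>t_n\}$, freeze the threshold at the population quantile $x_n^\ast=\Phi^{-1}(1-n_1/n)$, compute the first two moments via the Mills ratio, and then account for the random-versus-deterministic threshold swap through the binomial fluctuation of the exceedance count. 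Both identify the same leading term through essentially the same quantile asymptotics $x_n^\ast\sim\sqrt{2\log(n/n_1)}$. What each buys: your moment computation gives a two-sided estimate for free (the paper's block-maximum bound is one-sided), but your error control is cruder in one respect you correctly flag --- the variance bound $O(n_1\log(n/n_1))$ for the frozen-threshold sum and the $O_p(\sqrt{n_1}\,x_n^\ast)$ threshold-swap correction are only $o_p(n_1)$ under the side condition $\log(n/n_1)=o(n_1)$, because the unconditioned exceedance sum carries the binomial count fluctuation, each unit of which costs about $x_n^\ast$. The paper's blocking argument avoids this: it conditions on there being exactly $n_1$ terms and exploits that the intermediate order statistics themselves fluctuate around $a_n$ only by $O_p\bigl(1/(\sqrt{n_1}\,a_n)\bigr)$, so no extra hypothesis on the growth of $n_1$ is needed. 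For the regime relevant to Theorem~2 (and for any $n_1$ growing at least polylogarithmically) the two conclusions coincide; your closing remark that the robust formulation is the multiplicative one $S=n_1\sqrt{2\log(n/n_1)}\,(1+o_p(1))$ is a fair characterization of what your method delivers unconditionally. (Note also that the displayed equation in the lemma omits the factor $n_1$ in front of $\sqrt{2\log(n/n_1)}$; your reading of the intended statement, matching the appendix and the use in Theorem~2, is the right one.)
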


\begin{proof of lemma}
As the normal distribution satisfies the von Mises conditions, the asymptote of the $n_{1}$th maximum values $x_{(n-n_{1}+1),n}$ satisfies
\begin{equation}
\left(X_{(n-n_{1}+1),n}-a_{n}\right)/b_{n}\rightarrow \mathcal{N}(0,1)
\end{equation}
where $a_{n} \equiv F^{-1}\left(1-\frac{n_{1}}{n}\right)$ and $b_{n} \equiv \sqrt{n_{1}}/\left(n f(a_{n})\right)$, here $F(x)$ means the cdf of $X_{i}$, and $f(x)$ means the distribution function of $X_{i}$ (see Theorem 8.3.4 and Theorem 8.5.3 in \cite{Arnold2008}).\\
In our case, the asymptotic form of $a_{n}$ becomes $a_{n} \rightarrow \sqrt{2\log {\frac{n}{n_{1}}} - \log \log \left(\frac{n}{n_{1}}\right)^{2}}$,  $b_{n} \rightarrow \frac{\sqrt{n_{1}}}{n}\frac{1}{\sqrt{2\pi}}e^{-\frac{1}{2}a_{n}^{2}}$, and the leading term of the $X_{(n-n_{1}+1),n} $becomes
\begin{equation}
X_{(n-n_{1}+1),n} = a_{n} + o_{p}(\frac{n_{1}}{n})
\end{equation} 
Let us proof the lemma using this result.
First, we split the $X_{(n-n_{1}+1),n}, ... X_{(n),n}$ samples by $T$ groups that satisfy $1 \ll T < n_{1} \ll n$.
Each group contains $\left[n_{1}/T\right]$ terms.\\
The maximum in the $t+1$th group, $Y_{t+1}$ satisfies

\begin{eqnarray*}
Y_{t+1} &\le& \sqrt{2\log \left(n/\left(n_{1}*t/T\right)\right)} + o_{p}(\frac{n_{1}}{n}) \\
&=&\sqrt{2\log \left(n/n_{1}\right) + 2\log \left(T/t\right)} + o_{p}(\frac{n_{1}}{n}) \\
&=&\sqrt{2\log \left(n/n_{1}\right)}  \times \sqrt{1 + \log{\left(T/t\right)}/\log{\left(n/n_{1}\right)}} + o_{p}(\frac{n_{1}}{n}) \\
&\le& \sqrt{2\log \left(n/n_{1}\right)}  \times \left(1 + \log{\left(T/t\right)}/\log{\left(n/n_{1}\right)}\right) \\
&=&  \sqrt{2\log \left(n/n_{1}\right)} + \sqrt{2} \log{\left(T/t\right)}/\sqrt{\log{\left(n/n_{1}\right)}}
\end{eqnarray*}
Therefore,
\begin{eqnarray*}
\sum_{i=n-n_{1}+1}^{n}x_{(i),n} &\le& \frac{n_{1}}{T}\sum_{t=1}^{T-1} Y_{t+1} + \frac{n_{1}}{T}\sqrt{2\log n} \\
(r.h.s)&=& \frac{n_{1}}{T}\left(T-1\right) \sqrt{2\log \left(n/n_{1}\right)} \\
&+& \sqrt{2}\left(T-1\right)/\sqrt{\log{\left(n/n_{1}\right)}}+ \frac{n_{1}}{T}\sqrt{2\log n} \\
&\le& \left(n_{1} - \frac{n_{1}}{T}\right)\sqrt{2\log \left(n/n_{1}\right)} \\
&+& \frac{n_{1}}{T}\left(\sqrt{2\log n/n_{1}} +\sqrt{2}\frac{\log n_{1}}{\sqrt{\log (n/n_{1)}}}\right) \\
&=& n_{1}\sqrt{2\log \left(n/n_{1}\right)} + \frac{n_{1}}{T}\frac{\sqrt{2} \log n_{1}}{\sqrt{\log \left(n/n_{1}\right)}} + o_{p}(n_{1})
\end{eqnarray*}
If we choose $T$ that satisfies $1 \ll T < n_{1} \ll n$ properly, e.g., $T = \sqrt{n_{1}}$, the second term becomes $o_{p}(n_{1})$.
and the lemma is proven.
\end{proof of lemma}

\bibliographystyle{unsrtnat}

\bibliography{list.bib}

\begin{thebibliography}{15}
\providecommand{\natexlab}[1]{#1}
\providecommand{\url}[1]{\texttt{#1}}
\expandafter\ifx\csname urlstyle\endcsname\relax
  \providecommand{\doi}[1]{doi: #1}\else
  \providecommand{\doi}{doi: \begingroup \urlstyle{rm}\Url}\fi

\bibitem[McLachlan and Peel(2000)]{Mclachlan2000}
G.~J. McLachlan and D.~Peel.
\newblock \emph{Finite mixture models}.
\newblock Wiley Series in Probability and Statistics, New York, 2000.

\bibitem[Chauveau et~al.(2017)Chauveau, Garel, and Mercier]{Chauveau2017}
Didier Chauveau, Bernard Garel, and Sabine Mercier.
\newblock {Testing for univariate Gaussian mixture in practice}.
\newblock working paper or preprint, November 2017.

\bibitem[HARTIGAN(1985)]{Hartigan1985}
J.~A. HARTIGAN.
\newblock A failure of likelihood asymptotics for normal mixtures.
\newblock \emph{Proceedings of the Barkeley Conference in Honor of Jerzy Neyman
  and Jack Kiefer, 1985}, 2:\penalty0 807--810, 1985.

\bibitem[Garel(2001)]{Garel2001}
Bernard Garel.
\newblock Likelihood ratio test for univariate gaussian mixture.
\newblock \emph{Journal of Statistical Planning and Inference}, 96\penalty0
  (2):\penalty0 325 -- 350, 2001.
\newblock ISSN 0378-3758.
\newblock \doi{https://doi.org/10.1016/S0378-3758(00)00216-0}.
\newblock URL
  \url{http://www.sciencedirect.com/science/article/pii/S0378375800002160}.

\bibitem[Chen et~al.(2001)Chen, Chen, and Kalbfleisch]{Chen2001}
Hanfeng Chen, Jiahua Chen, and John~D. Kalbfleisch.
\newblock A modified likelihood ratio test for homogeneity in finite mixture
  models.
\newblock \emph{Journal of the Royal Statistical Society. Series B (Statistical
  Methodology)}, 63\penalty0 (1):\penalty0 19--29, 2001.
\newblock ISSN 13697412, 14679868.

\bibitem[Chen et~al.(2004)Chen, Chen, and Kalbfleisch]{Chen2004}
Hanfeng Chen, Jiahua Chen, and John~D. Kalbfleisch.
\newblock Testing for a finite mixture model with two components.
\newblock \emph{Journal of the Royal Statistical Society: Series B (Statistical
  Methodology)}, 66\penalty0 (1):\penalty0 95--115, 2004.
\newblock \doi{10.1111/j.1467-9868.2004.00434.x}.
\newblock URL
  \url{https://rss.onlinelibrary.wiley.com/doi/abs/10.1111/j.1467-9868.2004.00434.x}.

\bibitem[Charnigo and Sun(2004)]{Charnigo2004}
Richard Charnigo and Jiayang Sun.
\newblock Testing homogeneity in a mixture distribution via the l2 distance
  between competing models.
\newblock \emph{Journal of the American Statistical Association}, 99\penalty0
  (466):\penalty0 488--498, 2004.
\newblock \doi{10.1198/016214504000000494}.

\bibitem[Chen and Li(2009)]{Chen2009}
Jiahua Chen and Pengfei Li.
\newblock Hypothesis test for normal mixture models: The em approach.
\newblock \emph{The Annals of Statistics}, 37\penalty0 (5A):\penalty0
  2523--2542, 2009.
\newblock ISSN 00905364.
\newblock URL \url{http://www.jstor.org/stable/30243714}.

\bibitem[Chen et~al.(2012)Chen, Li, and Fu]{Chen2012}
Jiahua Chen, Pengfei Li, and Yuejiao Fu.
\newblock Inference on the order of a normal mixture.
\newblock \emph{Journal of the American Statistical Association}, 107\penalty0
  (499):\penalty0 1096--1105, 2012.
\newblock \doi{10.1080/01621459.2012.695668}.
\newblock URL \url{https://doi.org/10.1080/01621459.2012.695668}.

\bibitem[Watanabe(2018)]{Watanabe2018}
Sumio Watanabe.
\newblock \emph{Mathematical Theory of Bayesian Statistics}.
\newblock Chapman and Hall/CRC, New York, 2018.

\bibitem[Attias(2000)]{Attias2000}
Hagai Attias.
\newblock A variational baysian framework for graphical models.
\newblock In \emph{Advances in neural information processing systems}, pages
  209--215, 2000.

\bibitem[Blei et~al.(2017)Blei, Kucukelbir, and McAuliffe]{Blei2017}
David~M Blei, Alp Kucukelbir, and Jon~D McAuliffe.
\newblock Variational inference: A review for statisticians.
\newblock \emph{Journal of the American Statistical Association}, 112\penalty0
  (518):\penalty0 859--877, 2017.

\bibitem[Watanabe and Watanabe(2006)]{WatanabeWatanabe2006}
Kazuho Watanabe and Sumio Watanabe.
\newblock Stochastic complexities of gaussian mixtures in variational bayesian
  approximation.
\newblock \emph{Journal of Machine Learning Research}, 7\penalty0
  (Apr):\penalty0 625--644, 2006.

\bibitem[Watanabe and Watanabe(2007)]{WatanabeWatanabe2007}
Kazuho Watanabe and Sumio Watanabe.
\newblock Stochastic complexities of general mixture models in variational
  bayesian learning.
\newblock \emph{Neural Networks}, 20\penalty0 (2):\penalty0 210--219, 2007.

\bibitem[Barry C.~Arnold and Nagaraja(2008)]{Arnold2008}
N.~Balakrishnan Barry C.~Arnold and H.~N. Nagaraja.
\newblock \emph{A First Course in Order Statistics (Classics in Applied
  Mathematics)}.
\newblock Society for Industrial and Applied Mathematics, 2008.

\end{thebibliography}
\end{document}